\documentclass[letterpaper, 10 pt, conference]{ieeeconf}  

\usepackage{cite}
\usepackage{amsmath,amssymb,amsfonts}
\usepackage{algorithm}
\usepackage{algorithmic}
\usepackage{graphicx}
\usepackage{textcomp}
\usepackage{float}
\usepackage{caption}
\usepackage{subcaption}
\usepackage{url}
\usepackage{booktabs} 



\usepackage{xspace}
\usepackage{soul}
\usepackage{mathtools}
\usepackage{makecell}
\usepackage{nicefrac}


\newcommand{\la}{\leftarrow}

\newcommand{\ie}{\unskip, i.\,e.,\xspace}
\newcommand{\eg}{\unskip, e.\,g.,\xspace}


\newcommand{\N}{\ensuremath{\mathbb{N}}}

\newcommand{\X}{\ensuremath{\mathbb{X}}}

\newcommand{\U}{\ensuremath{\mathbb{U}}}


\newcommand{\spc}{\ensuremath{\,\,}}

\DeclareMathOperator*{\argmin}{arg\,min}


%

\newtheorem{lemma}{Lemma}

\title{Effects of sampling and horizon in predictive reinforcement learning}

\author{Pavel Osinenko, Dmitrii Dobriborsci
\thanks{The authors are with the Computational and Data Science and Engineering Center, Skolkovo Institute of Science and Technology.
}}

\begin{document}

\maketitle
\thispagestyle{empty}
\pagestyle{empty}

\begin{abstract}
Plain reinforcement learning (RL) may be prone to loss of convergence, constraint violation, unexpected performance, etc.
Commonly, RL agents undergo extensive learning stages to achieve acceptable functionality.
This is in contrast to classical control algorithms which are typically model-based.
An direction of research is the fusion of RL with such algorithms, especially model-predictive control (MPC).
This, however, introduces new hyper-parameters related to the prediction horizon.
Furthermore, RL is usually concerned with Markov decision processes.
But the most of the real environments are not time-discrete.
The factual physical setting of RL consists of a digital agent and a time-continuous dynamical system.
There is thus, in fact, yet another hyper-parameter -- the agent sampling time.
In this paper, we investigate the effects of prediction horizon and sampling of two hybrid RL-MPC-agents in a case study with a mobile robot parking, which is in turn a canonical control problem.
We benchmark the agents with a simple variant of MPC.
The sampling showed a kind of a ``sweet spot'' behavior, whereas the RL agents demonstrated merits at shorter horizons.
\end{abstract}


    
%
%
%


%

\section{Introduction}
\label{sec:introduction}

\PARstart{R}{einforcement} Learning (RL) shows remarkable performance in playground settings of video- and table games such as Starcraft, chess and Go \cite{silver16, silver18, vinyals19}.
Industry-close applications appear more challenging to RL due to the lack of freedom in training \cite{LiCh19, krauth19, yang19, park20, Kakade20}.
This may be related to limited resources and technical constraints.
In general, tuning of RL algorithms is a sensitive matter \cite{Kober13, LiYi19}.
Currently, industry is dominated by classical control-theoretic methods such as model predictive control (MPC) \cite{Soest06, Kouro09, Ma12}.
MPC is widely used in such areas as chemical industry and oil refining \cite{Qin03, Hrovat12, Darby12, Forbes15}.
Somewhat in contrast to the classical control, RL is aimed at a learning-based, model-free (in some configurations) approach.
Nevertheless, it is perhaps the model-based formal guarantees that make classical control attractive to the industry.
In fact, integration of predictive mechanisms into RL is not new (see \eg \cite{Bertsekas99,Bertsekas05} for a reward roll-out methodology).

\textbf{Related work.} Up to date, using predictive elements from a classical control theory to improve RL approaches is an active area of research.
For instance, the promising recent concept of a so-called RL Dreamer effectively uses image-based prediction reminiscent to adaptive MPC \cite{hafner20}.
Some model estimation techniques for prediction in the so-called representational learning were introduced in \cite{guo20}.
In the so-called differentiable MPC \cite{amos18}, a combination of model-free and model-based RL elements was suggested.
An off-policy actor-critic deep value MPC combining model-based trajectory optimization with the value function estimation was developed in \cite{hoeller20}.
Another attempt to combine model-based approach and learning techniques is described in \cite{East20} where a differentiable linear quadratic MPC framework for a safe imitation learning was proposed.
A predictive scheme was suggested in \cite{reddy20}, via an RL agent combined planning with MPC to learn a forward dynamics model.
The development of this direction has led to the results presented in \cite{karnchanachari20}.
The authors used MPC to learn a cost function from scratch via high-level objective learning and tested it on the real ground vehicle.
Clearly, integration of prediction methods in RL is gaining attraction.
Therefore, questions of hyper-parameter effects is of relevance.



Speaking of MPC, it has certain fundamental hyper-parameters, such as horizon length and prediction step size, impacting the overall performance \cite{Cannon05, Tippett14, Wojsznis03, Worthmann12, Sawma18}.
The current study focuses on the effects of such hyper-parameters, but also time discretization step size (in brief, sampling time).
Sampling time may have drastic effects on system stability \cite{Silva10, Zhang14, khosla89}.
A stable and optimally tuned system with a continuous controller may be destabilized after time discretization \cite{Laskawski15}.
When it comes to RL, there is evidence of performance deterioration of Q-learning under high sampling rates \cite{tallec19}.
Authors suggested incorporating the advantage function to remedy issues of collapsing Q-functions.
An adaptive discretization for model-based RL via an optimistic one-step value iteration approach was proposed in \cite{Sinclair20}. 
However, not much attention has been paid to the effects of prediction step size and prediction horizon length parameters in RL.







\textbf{Summary.} 
In this study, the sensitivity of three predictive agents (MPC, a roll-out Q-learning, and the so-called stacked Q-learning) to the selected hyper-parameters (sampling rate, prediction step size and prediction horizon length) is investigated on a canonical example of a wheeled robot with dynamic steering torque and pulling force, also known as the extended nonholonomic double integrator \cite{sontag04, clarke10, kellet17, Abbasi17}.
Such a system is used in numerous studies for benchmarking \cite{Aguiar00, Pascoal02, Fazal-ur-Rehman05, Devon07, Watanabe10}.
Main findings can be summarized as follows.
In terms of the overall tendency for all the methods, too low sampling time leads to a performance deterioration (in terms of the accumulated stage cost), which may be explained by too short-sighted prediction.
Increasing the sampling time improves the performance to a certain point where prediction error starts to dominate.
Increasing the horizon length boosts the effects of prediction in all the methods and generally leads to a better performance, although at a higher computational cost.
Speaking of the method comparison, remarkably, the stacked Q-learning tends to outperform both the MPC and the roll-out algorithms at shorter horizons.
In particular, the stacked Q-learning achieved more successful robot parking count.
It should be noted here that the roll-out and stacked Q-learning have similar computational complexity.  

\textbf{Notation.}
Sequences: for any $z$: $\{z_{i|k}\}_i^N = \{ z_{1|k}, \dots, \\ z_{N|k} \} = \{ z_k, \dots, z_{k+N-1} \}$, if the starting index $k$ is emphasized; otherwise, just $\{z_i\}^N = \{ z_1, \dots, z_N \}$.
If $N$ in the above is omitted, the sequence is considered infinite.

\section{Environment description}
\label{sampling}





There are three types of the closed-loop setup (agent-environment \ie controller-system), namely, pure discrete, pure continuous, and hybrid. 
A pure discrete-time design is a discrete system with a discretized controller \cite{Nevsic99}. 
A pure continuous-time is a continuous-time system design with a continuous controller, which is rarely possible unless the controller is analogue.
A hybrid system is a continuous-time system design with a discretized controller.
A continuous-time system design is more suitable for linear systems since some important structural properties might be lost after discretization.
The hybrid setting with a continuous-time environment and a discretized controller is considered as a more realistic as the other two variants, and is used as the foundation in this work.
Specifically, the following so-called sample-and-hold setting (S\&H), where the control actions are held constant during $\delta$-intervals, is considered:
\begin{equation}
	\label{eq:sys-SH}
    \begin{aligned}
        & \mathcal D^+ x = f(x, u^\delta), \\
        & x_k := x(k \delta), \\
        & u^\delta(t) \equiv u_k = \kappa(x_k), t \in \left[ k\delta, (k + 1)\delta \right],
    \end{aligned}
\end{equation}
where $\mathcal{D}^{+}$ is a suitable differential operator, $x$ -- state, $u$ -- action, $\kappa$ -- policy, $t$ -- time, $k \in \N$, $\delta > 0$ -- discretization step. 
Notice that the agent's actions are to be optimized at discrete time steps, which makes the problem tractable, unlike in a pure continuous setting.
In the next sections, predictive control mechanisms are discussed in relation to the described S\&H setup.






\section{Predictive control}
\label{prediction-controls}
\textbf{Prediction horizon effects.}
In general, control over a prediction horizon may help improve the agent's performance and aid system stabilization by using long-term information about the future states.
At the same time, in presence of model prediction error, the prediction horizon length and also the prediction step size are subject to careful tuning.
In general, the prediction horizon length refers to a period starting from the current time to a point until which control actions are to be optimized.
The prediction step size can be described in the following example: if it is two times bigger than the sampling time then the state prediction is two times finer than the sampled control actions.
Evidently, higher prediction horizons lead to higher computational complexity.
(the total number of action sequences to be searched over increases exponentially with the horizon).
At the same time, too short horizon may lead to a failure to even stabilize the system.
Traditionally, MPC, which is discussed in the next section, is considered a standard predictive controller.
\begin{algorithm*}
	\caption{roll-out Q-learning}
	\label{alg:nroll}
	\begin{algorithmic}
		\STATE {\bfseries Input:} System model, sampling time $\delta$, prediction step multiplier $s$, prediction horizon $N$
		\WHILE{$true$}
		\STATE Get state $x_k$
		\STATE Push the current state-action pair into the buffer (experience replay)
		\STATE Update critic: $ \vartheta_k := \argmin \limits_\vartheta J^c_k(\vartheta)$ (see eq. \ref{eq:critic-cost})
		\STATE Update actor: $ \{u_{i|k}\}_{i}^{N} := \min \limits_{\{u_{i|k}\}_{i}^{N}} J^a_{\text{RQL}} \left( x_k|\{u_{i|k}\}_{i}^{N}; \vartheta_k \right) = \sum \limits_{i=1}^{N-1} \gamma^{i-1} \rho(\hat x_{i|k}, u_{i|k}) + \hat{Q}(\hat x_{N|k}, u_{N|k}; \vartheta_k)$, where the state sequence $\{\hat x_{i|k}\}_i^N$ is predicted via \eg \eqref{eqn:Euler}
		\STATE Apply the first action from the sequence, namely, $u_{1|k}$, to the system
		\ENDWHILE
	\end{algorithmic}
\end{algorithm*}
\subsection{Model Predictive Control} \label{sec:MPC}

A fairly general optimal control in the S\&H setting can be formulated as follows:
\begin{equation}
    \begin{aligned}
        \min_{\{u_{i}\}} \quad & J_{\text{OC}} \left(x_0|\{u_{i}\} \right) := \sum_{i=1}^{H} \gamma^{i-1} \rho \left( \hat x_{i|0}, u_i \right), \\
        \textrm{s.t.} \quad & \hat x_{2|i} = \Phi(s \delta, \hat x_{1|i}, u_i), \spc \hat x_{1|0} = x_0, \\
        & \mathcal D^+ x = f(x, u^\delta),  
    \end{aligned}
\end{equation}
where $\rho$ is the stage cost (a reward or utility in case of maximization), $J_{\text{OC}}$ is the accumulated stage cost, where $\gamma$ is the discounting factor, $H$ is the horizon length which can be finite ($H=N, N \in \N$) or infinite ($H=\infty$), $s$ is the prediction step size multiplier \ie the prediction step size is $s \delta$, $\Phi$ is a numerical integration scheme \ie $\hat x_{2|i} = \Phi(\delta, \hat x_{1|i}, u_i)$ is the predicted state emerging from $\hat x_{1|i}$ after the time $\delta$ and under the constant action $u_i$.
If $H = \infty$, $J_{\text{OC}}$ is also known as cost-to-go.
The simplest numerical integration scheme is the Euler one:
\begin{equation}
	\label{eqn:Euler}
	\hat x_{2|i} := \hat x_{1|i} + s \delta f(\hat x_{1|i}, u_i).
\end{equation}
In the following, the system dynamics $\mathcal D^+ x = f(x, u^\delta)$ are always meant, but omitted for brevity.
Based on $H$, two basic optimal control formalisms are generally known, namely, Euler-Lagrange and Hamilton-Jacobi-Bellman \cite{Primbs1999-opt-ctrl-CLF}.
Euler-Lagrange formalism possesses a ``local'' character -- it seeks a controller that optimizes the cost some $N$ steps ahead starting from the current state.
Hamilton-Jacobi-Bellman, in contrast, is ``global'' -- the goal here is to find a controller for cost optimization over an indefinite number of future steps. 
An infinite horizon may also be interpreted as an open horizon -- a situation, in which the user is unsure of an exact specification of the horizon.
In turn, a finite-horizon optimal control problem may be interpreted as a computationally tractable approximation of the infinite-horizon one.
MPC is the de facto scheme for finite-horizon optimal control problems \cite{Nikolaou01, Grune12, Mayne14}.
Here, the infinite horizon is cut at some finite time and an optimal solution is computed for the new fixed horizon at each time step.
Numerous modifications and a wide variety of techniques for guaranteeing closed-loop stability of MPC were developed \cite{Chen98, Scokaert99}.
In the S\&H setting, a simple unconstrained MPC setup can be written as:
\begin{equation}
    \begin{aligned}
        \min_{\{u_{i|k}\}_{i}^{N}} \quad & J_{\text{MPC}} \left( x_k| \{u_{i|k}\}_{i}^{N} \right) := \sum_{i=1}^{N} \gamma^{i-1} \rho \left(\hat x_{i|k}, u_{i|k}\right), \\
        \textrm{s.t.} \quad & \hat x_{i+1|k} = \Phi(s \delta, \hat x_{i|k}, u_{i|k}). \\
    \end{aligned}
\end{equation}
When requiring constraint satisfaction of the kind $x_{i|k} \in \X, u_{i|k} \in \U$, MPC has the advantage of guaranteed safety \cite{Mayne14}.
Also, various schemes for stabilization guarantees are known \cite{Grune12, Mayne14}.
Evidently, a simple MPC controller is suboptimal, if the suboptimality is meant as the difference between the factual cost-to-go under the MPC controller and the optimized cost-to-go (the value function).
This is somewhat in contrast to the philosophy behind RL where an agent seeks to approximate the value function.
In general, enlarging the horizon leads to suboptimality reduction \cite{Grune08}.
As mentioned above, careful tuning is required in general.
The next section discusses specifically fusion of MPC-elements with RL.


\section{Fusion of RL and predictive controls}
\label{fusion}


Value iteration Q-learning (QL) actor-critic is chosen here as the basis for RL algorithms due to its convenience, although similar derivations could be done for the standard value and policy iteration. 
A basic online, model-free, value iteration, on-policy QL with a neural network critic reads:
\begin{equation}
    \begin{array}{lll}
        u_k & := & \argmin \limits_u \hat Q(x_k, u; \vartheta_k), \\
        \vartheta_k & := & \argmin \limits_\vartheta \frac 1 2 \big( \hat Q(x_k, u_k; \vartheta) - \\
        & & \hat Q(x_{k-1}, u_k; \vartheta^-) - \rho(x_k, u_k) \big)^2, \\
    \end{array}
\end{equation}
where $\vartheta$ is vector of the critic neural network weights to be optimized, $\vartheta^-$ is the vector of the weights from the previous time step, $\hat Q(\bullet, \bullet; \vartheta)$ -- Q-function approximation parameterized by $\vartheta$.
The latter approximation is effectively the temporal difference (TD) in the value iteration form.
It may be generalized to a custom size experience replay.
Let $e_{k}(\vartheta) = \vartheta \varphi( x_{k-1}, u_{k-1} ) - \gamma \vartheta^- \varphi(x_{k}, u_{k}) - \rho(x_{k-1}, u_{k-1})$ denote the temporal difference at time step $k$.
Then, a more general critic cost function may be formulated as
\begin{equation}
	\label{eq:critic-cost}
	J^{c}_k(\vartheta) = \dfrac{1}{2} \sum_{i = k}^{k+M-1} e_{i}^{2}(\vartheta),
\end{equation}
where $M$ is the buffer size.

The \textit{roll-out QL} (RQL) considered here, given the MPC background of Section \ref{sec:MPC}, can be regarded as simply $N-1$ horizon MPC with a terminal cost being the Q-function approximation, namely, its actor reads: 
\begin{equation}
    \begin{aligned}
        \min_{\{u_{i|k}\}_{i}^{N}} & J^a_{\text{RQL}} \left( x_k|\{u_{i|k}\}_{i}^{N}; \vartheta_k \right) := \\ 
        & \sum_{i=1}^{N-1} \gamma^{i-1} \rho(\hat x_{i|k}, u_{i|k}) + \hat{Q}(\hat x_{N|k}, u_{N|k}; \vartheta_k), \\
        \textrm{s.t.} \quad & \hat x_{i+1|k} = \Phi(s \delta, \hat x_{i|k}, u_{i|k}). \\
    \end{aligned}
\end{equation}


\begin{algorithm*}[tb]
	\caption{Stacked Q-learning}
	\label{alg:sql}
	\begin{algorithmic}
		\STATE {\bfseries Input:} System model, sampling time $\delta$, prediction step multiplier $s$, prediction horizon $N$
		\WHILE{$true$}
		\STATE Get state $x_k$
		\STATE Push the current state-action pair into the buffer (experience replay)
		\STATE Update critic: $ \vartheta_k := \argmin \limits_\vartheta J^c_k(\vartheta)$ (see eq. \ref{eq:critic-cost})
		\STATE Update actor: $ \{u_{i|k}\}_{i}^{N} := \min \limits_{\{u_{i|k}\}_{i}^{N}} J^a_{\text{SQL}} \left( x_k|\{u_{i|k}\}_{i}^{N}; \vartheta_k \right) = \sum \limits_{i=1}^{N} \hat{Q}(\hat x_{i|k}, u_{i|k}; \vartheta_k)$, where the state sequence $\{\hat x_{i|k}\}_i^N$ is predicted via \eg \eqref{eqn:Euler}
		\STATE Apply the first action from the sequence, namely, $u_{1|k}$, to the system
		\ENDWHILE
	\end{algorithmic}
\end{algorithm*}

\noindent The \textit{stacked QL} (SQL)\cite{Osinenko2017stacked, Beckenbach18}, in turn, can be regarded as simply MPC with the stage cost substituted for Q-function approximation.
The justification for such a setup may be done via Lemma \ref{lem:stacked-QL}.
It says that the optimal policy from optimization of a stacked Q-function is essentially the same as the globally optimal one. 
First, denote the stacked Q-function as follows:
\begin{equation}
    \begin{aligned}
        \bar{Q} \left(x_k,  \{ u_{i|k}\}_{i}^N \right) := \sum_{i=1}^{N} Q(x_{i|k}, u_{i|k}). \\
        \label{eq:stacked-value-function}
    \end{aligned}
\end{equation}
With this notation at hand, proceed to the lemma.
 


\begin{lemma}
\label{lem:stacked-QL}
For any $x_k$, it holds that

\begin{equation}
	\label{lemma:V*-leq-Gamma}
    \begin{aligned}
        & \min_{\{ u_{i|k} \}_{i}^N} \bar{Q} \left(x_k, \{ u_{i|k}\}_{i}^N \right) = \sum_{i=1}^{N} \min_{ u_{i|k}}  Q(x_{i|k}, u_{i|k}). \\
    \end{aligned}
\end{equation}
\end{lemma}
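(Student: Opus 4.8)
The plan is to prove the stated identity by observing that, once the predicted states $x_{i|k}$ are treated as fixed parameters, the stacked objective $\bar{Q}(x_k, \{u_{i|k}\}_i^N) = \sum_{i=1}^N Q(x_{i|k}, u_{i|k})$ is \emph{separable}: the $i$-th summand depends on only one of the free decision variables, namely $u_{i|k}$. The whole lemma then reduces to the elementary fact that the minimum of a sum of functions, each depending on its own variable, equals the sum of the individual minima. I would therefore state and prove that fact directly for the summands $g_i(u) := Q(x_{i|k}, u)$.

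I would establish the equality through two inequalities. For the direction $\min_{\{u_{i|k}\}_i^N} \bar{Q} \le \sum_{i=1}^N \min_{u_{i|k}} Q(x_{i|k}, u_{i|k})$, pick for each $i$ a per-term minimizer $u_{i|k}^\star \in \argmin_{u} Q(x_{i|k}, u)$; the concatenated sequence $\{u_{i|k}^\star\}_i^N$ is admissible in the joint problem, and evaluating $\bar{Q}$ on it gives exactly $\sum_i Q(x_{i|k}, u_{i|k}^\star) = \sum_i \min_{u} Q(x_{i|k}, u)$, which upper-bounds the joint minimum. For the reverse direction, note that for \emph{any} admissible sequence $\{u_{i|k}\}_i^N$ one has $Q(x_{i|k}, u_{i|k}) \ge \min_{u} Q(x_{i|k}, u)$ termwise; summing over $i$ and then taking the minimum over the sequence on the left-hand side preserves the bound, yielding $\min_{\{u_{i|k}\}_i^N} \bar{Q} \ge \sum_i \min_{u} Q(x_{i|k}, u)$. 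The two inequalities together give the claimed identity.

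The main subtlety, and the step I would be most careful about, is justifying the separability, i.e., that the state $x_{i|k}$ entering the $i$-th summand is held fixed with respect to the action $u_{i|k}$ being optimized there. This is precisely why the dynamic coupling $\hat x_{i+1|k} = \Phi(s\delta, \hat x_{i|k}, u_{i|k})$ is \emph{not} enforced inside the stacked cost; were the predicted states functions of the earlier actions, the sum would cease to be separable and the equality could fail. I would make this assumption explicit at the outset. A secondary point is to ensure the per-term minimizers exist so that the $\argmin$ selection used in the first inequality is well-defined; this follows under mild regularity, e.g.\ continuity of $Q(x_{i|k}, \cdot)$ together with compactness of the admissible action set $\U$ (or coercivity of $Q$ in $u$), which I would assume or cite as the standing setting.
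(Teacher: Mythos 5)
Your proof is valid, but only under a reading of the lemma that differs from the one the paper's own proof adopts, and the difference is exactly the point you flagged. You treat the predicted states $x_{i|k}$ as fixed parameters, which makes $\bar{Q}$ separable and reduces the identity to the elementary fact that the minimum of a sum of functions of independent variables equals the sum of the individual minima. The paper's proof instead takes the coupled reading: the states are propagated from $x_k$ by the dynamics, which is why it introduces two \emph{distinct} optimal state sequences, $\{\bar{x}^*_{i|k}\}$ from the stacked minimization and $\{x^*_{i|k}\}$ from the element-wise (greedy) minimization, identifies $\min_u Q(x,u)$ with the value function $V(x)$, and attempts to sandwich the stacked minimum between $\sum_i V(x^*_{i|k})$ from both sides using the principle of optimality. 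Two different state sequences can only arise if each $x_{i|k}$ depends on the preceding actions; moreover, the SQL actor that the lemma is meant to justify enforces $\hat{x}_{i+1|k} = \Phi(s\delta, \hat{x}_{i|k}, u_{i|k})$, and $\bar{Q}$ is written as a function of $x_k$ and the actions alone. So the coupled reading is the intended one, and your argument, as you yourself note, does not cover it.

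However, your caveat that the equality ``could fail'' under coupling is not merely a caution: it is correct, and it points at a genuine flaw in the paper's own proof. The paper's inequality $V(x^*_{i|k}) \le V(\bar{x}^*_{i|k})$, justified only by the phrase ``since $x^*_{i|k}$ is the optimal state sequence,'' does not follow: greediness of the actions along $\{x^*_{i|k}\}$ does not make $V$ pointwise smaller along that trajectory than along every other admissible trajectory. Concretely, take $\gamma = 1$ and states $A, B, C, D$ with $D$ absorbing and cost-free; from $A$, action $b$ moves to $B$ at stage cost $0$ and action $c$ moves to $C$ at stage cost $15$; from $B$ the only action $d$ moves to $D$ at cost $10$; from $C$ the only action $d$ moves to $D$ at cost $0$. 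Then $V(B) = 10$, $V(C) = 0$, $V(A) = \min(0 + 10,\, 15 + 0) = 10$, and the greedy rollout from $A$ with $N = 2$ goes through $B$, giving a right-hand side of $V(A) + V(B) = 20$; yet the action sequence $(c, d)$ gives a stacked cost of $Q(A,c) + Q(C,d) = 15 + 0 = 15$, so the left-hand side is at most $15 < 20$. Hence, under the coupled reading the claimed equality is false in general: only the direction $\min_{\{u_{i|k}\}_i^N} \bar{Q} \le \sum_i \min_{u_{i|k}} Q$ survives (this is the paper's ``minimum of the sum'' step, and also your first inequality), while under your decoupled reading the identity is true but says nothing about the SQL scheme with model-propagated states.
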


\begin{proof}
    Let the optimal action sequence for the stacked Q-learning be denoted as:
    
    \begin{equation}
        \begin{aligned}
            & \{ \bar{u}_{i|k}^{*}\}_{i}^{N} := \argmin_{\{ u_{i|k}\}_{i}^N} \bar{Q} (x_k, \{ u_{i|k}\}_{i}^N) \\
            & = \sum_{i=1}^{N} Q(x_{i|k}, u_{i|k}), 
        \end{aligned}
    \end{equation}
    
    The optimal action sequence of the element-wise Q-function optimization reads:
    
    \begin{equation}
        \begin{aligned}
            & \{ u_{i|k}^{*}\}_{i}^{N} := \{\argmin_{u_{i|k}} Q (x_{i|k}, u_{i|k})\}_i.
        \end{aligned}
    \end{equation}
    
    Denote the corresponding optimal state sequences as $\{ \bar{x}_{i|k}^{*}\}_{i}^{N} := \{ \bar{x}_{k}^{*}, \bar{x}_{k+1}^{*}, \ldots, \bar{x}_{k + N - 1}^{*}\}$ for the stacked Q-learning and $\{ x_{i|k}^{*}\}_{i}^{N} := \{ x_{k}^{*}, x_{k+1}^{*}, \ldots, x_{k + N - 1}^{*}\} $ for the ordinary one.
    Notice that the initial state is the same: $\bar{x}_{k}^{*} = x_{k}^{*} = x_k $. By definition the optimal Q-function is equal to the value function $V$:
    \begin{align}
        V(x_{i|k}^{*}) = \min_{u} Q(x_{i|k}^{*}, u), \\
        V(\bar{x}_{i|k}^{*}) = \min_{u} Q(\bar{x}_{i|k}^{*}, u).
    \end{align}
    
    Denote 
    
   \begin{equation}
        \bar{V}(\bar{x}_{i|k}^{*}) := Q(\bar{x}_{i|k}^{*}, \bar{u}_{i|k}^{*}).
    \end{equation}
    
    By the principle of optimality it holds that
    
    \begin{equation}
        V(\bar{x}_{i|k}^{*}) \leq \bar{V}(\bar{x}_{i|k}^{*}).
    \end{equation}
    
    But since $x_{i|k}^{*}$ is the optimal state sequence, 
    
    \begin{equation}
        V(x_{i|k}^{*}) \leq V(\bar{x}_{i|k}^{*}).
    \end{equation}
    
    Thus, $V(x_{i|k}^{*}) \leq \bar{V}(\bar{x}_{i|k}^{*})$ and applying sum to the both parts yields
    
    \begin{equation}
        \sum_{i=1}^{N} V(x_{i|k}^{*}) \leq \sum_{i=1}^{N} \bar{V}(\bar{x}_{i|k}^{*}) = \min_{\{ u_{i|k} \}_{i}^N} \bar{Q} (x_{k}, \{u_{i|k}\}_{i}^N). 
    \end{equation}
    
    On the other hand, the minimum of the sum is not greater than the sum of minima:
    
    \begin{equation}
        \min_{\{ u_{i|k} \}_{i}^N} \bar{Q} (x_{k}, \{u_{i|k}\}_{i}^N) \leq \sum_{i=1}^{N} V(x_{i|k}^{*}).
    \end{equation}
    
    Then, as required,
    \begin{equation}
    	\begin{aligned}
	        & \min_{\{ u_{i|k} \}_{i}^N} \bar{Q} (x_{k}, \{u_{i|k}\}_{i}^N) = \sum_{i=1}^{N} V(x_{i|k}^{*}) = \\ 
	        & \sum_{i=1}^{N} \min_{ u_{i|k}}  Q(x_{i|k}, u_{i|k}).
        \end{aligned}
    \end{equation}
    
\end{proof}

Since in practice, Q-functions cannot always be computed exactly, a temporal-difference-based critic can be employed to compute approximate stacked Q-function using the temporal difference method \eqref{eq:critic-cost}.
This can be done \eg using neural networks.
Finally, the stacked QL actor reads:
\begin{equation}
    \begin{aligned}
        \min_{\{u_{i|k}\}_{i}^{N}} \quad & J^a_{\text{SQL}} \left( x_k|\{u_{i|k}\}_{i}^{N}; \vartheta_k \right) = \sum_{i=1}^{N} \hat{Q}(\hat x_{i|k}, u_{i|k}; \vartheta_k), \\
        \textrm{s.t.} \quad & \hat x_{i+1|k} = \Phi(s \delta, \hat x_{i|k}, u_{i|k}).
    \end{aligned}
\end{equation}

The main features of the described algorithms for the clarity are given in Table~\ref{table:algms}.
\begin{table*}
	\caption{Algorithms intuition}
	\label{table:algms}
	\centering
	\begin{tabular}{lll}
		\toprule
		Name & Scheme & Description \\
		\midrule
		\textit{MPC} & Baseline & Finite sum of stage costs without a terminal cost \\
		\textit{RQL} & $\textrm{MPC} + Q_N$ & Finite sum of stage costs with a Q-function as the terminal cost \\
		\textit{SQL} & $\textrm{MPC} \wedge \textrm{QL}: r \la Q$ & Finite sum of Q-functions \\
		\bottomrule
	\end{tabular}
\end{table*}
\section{Numerical experiments}

\label{experiments}
All three described setups, namely, MPC, roll-out QL and stacked QL were studied in numerical experiments with wheeled robot parking.
In every experiment, a set of hyperparameters consisting of the horizon length $N$, prediction step size multiplier $s$, and the sampling time $\delta$, was fixed.
An experiment consisted of 30 runs, 600 s long each.
The robot started at a position on a 5 m circle around the origin, turned away from the latter.
The goal was to park the robot at the origin while achieving a desired orientation.
The parking was considered successful if the robot entered a 50 cm circle around the origin with a 5 deg tolerance in angle.
The accumulated stage cost, as well as the successful parking count, were considered the performance metrics.
The next section describes the system dynamics in detail.

\subsection{Environment} 

As the dynamic system, the three-wheel robot with dynamical pushing force and steering torque (a.k.a. ENDI -- extended non-holonomic double integrator) was considered. 

In Cartesian coordinates system description is the following:
\begin{equation}
    \label{system}
    \begin{aligned}
        & \dot{x} = v \cos \alpha, \\
        & \dot{y} = v \sin \alpha, \\
        & \dot{\alpha} = \omega, \\
        & \dot{v} = \left( \dfrac{1}{m} F \right), \\
        & \dot{\omega} = \left( \dfrac{1}{I} M \right). \\
    \end{aligned}
\end{equation}

where $x$ -- $x$-coordinate [m], $y$ -- $y$-coordinate [m], $\alpha$ -- turning angle [rad], $v$ -- velocity [m/s], $\omega$ -- angular velocity [rad/s], $F$ -- pushing force [N], $M$ -- steering torque [Nm], $m$ -- robot mass [kg], $I$ -- robot moment of inertia around vertical axis [kg $m^2$] ($m = 10$, $I = 1$).

\subsection{Simulation}

Implementation of MPC, roll-out Q-learning, and stacked Q-learning were done in a custom python framework \texttt{rcognita}\footnote{\url{https://github.com/AIDynamicAction/rcognita}}, developed specifically for hybrid simulation of RL agents (Fig.~\ref{simulation}). \\

\begin{figure}[ht]
    \begin{center}
    \centerline{\includegraphics[width=\columnwidth]{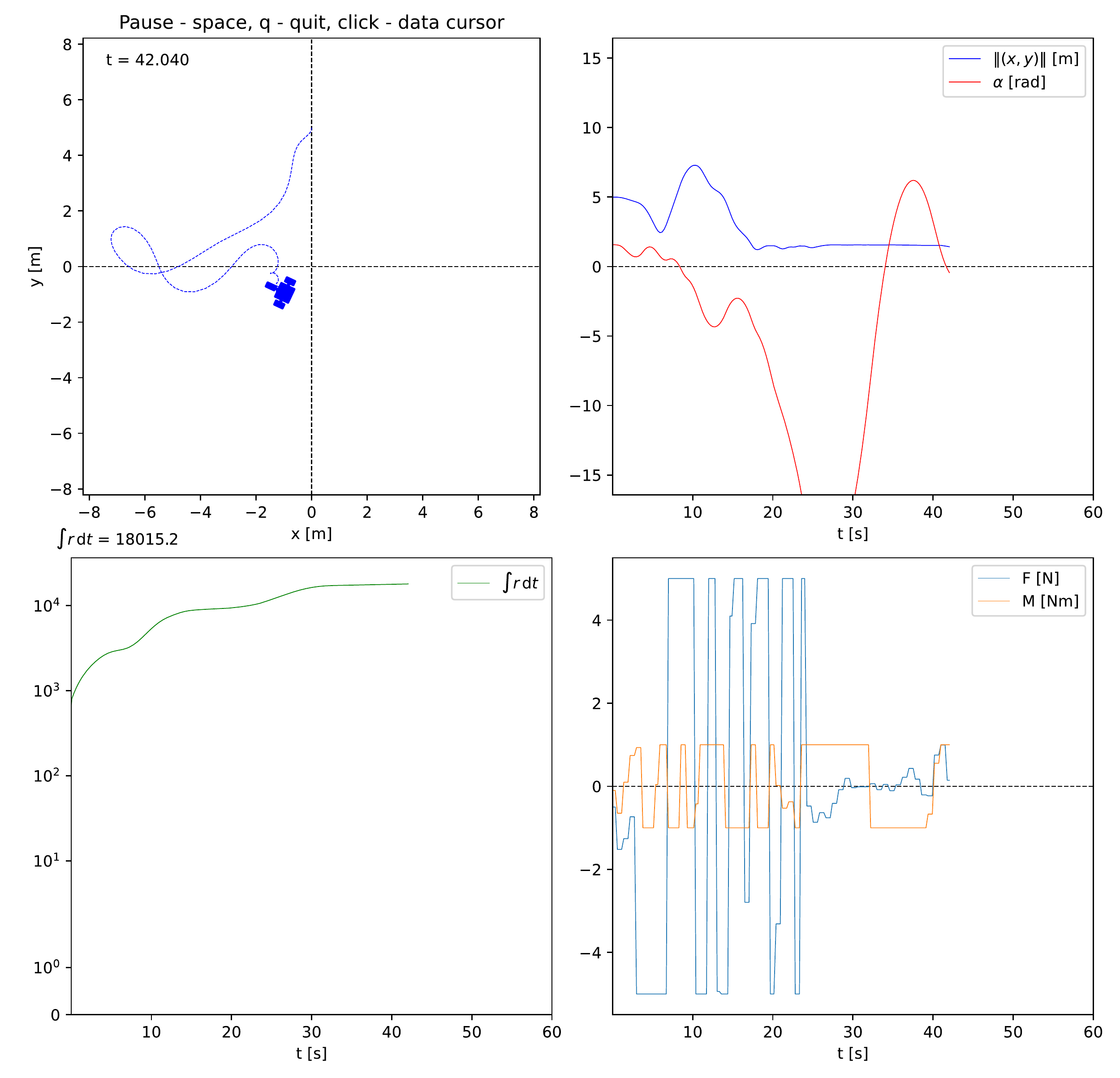}}
    \caption{Graphical output of the \texttt{rcognita} Python package.}
    \label{simulation}
    \end{center}
\end{figure}


The stage cost was considered in the following quadratic form:
\begin{equation}
    \begin{aligned}
        & \rho = \chi^\top R \chi, \\
    \end{aligned}
\end{equation}

where $\chi = [y, u]$, $R$ diagonal, positive-definite.
The critic structure was also chosen quadratic as follows:
\begin{equation}
    \begin{aligned}
        & \hat{Q}(x,u; \vartheta) := \vartheta \varphi^\top (x, u), \\
        & \varphi (x, u) := \textrm{vec} \left(\Delta_{u} \left([x|u] \otimes [x|u] \right) \right), \\
    \end{aligned}
\end{equation}

where $\vartheta$ -- critic weights, $\varphi$ -- critic activation function,
$\Delta_{u}$ -- operator of taking the upper triangular matrix, $\textrm{vec}$ -- vector-to-matrix transformation operation, $[x|u]$ -- stack of vectors $x$ and $u$,  $\otimes$ -- Kronecker product. 





The experimental results are presented below.

\begin{figure}[ht]
    \begin{center}
    \centerline{\includegraphics[width=\columnwidth]{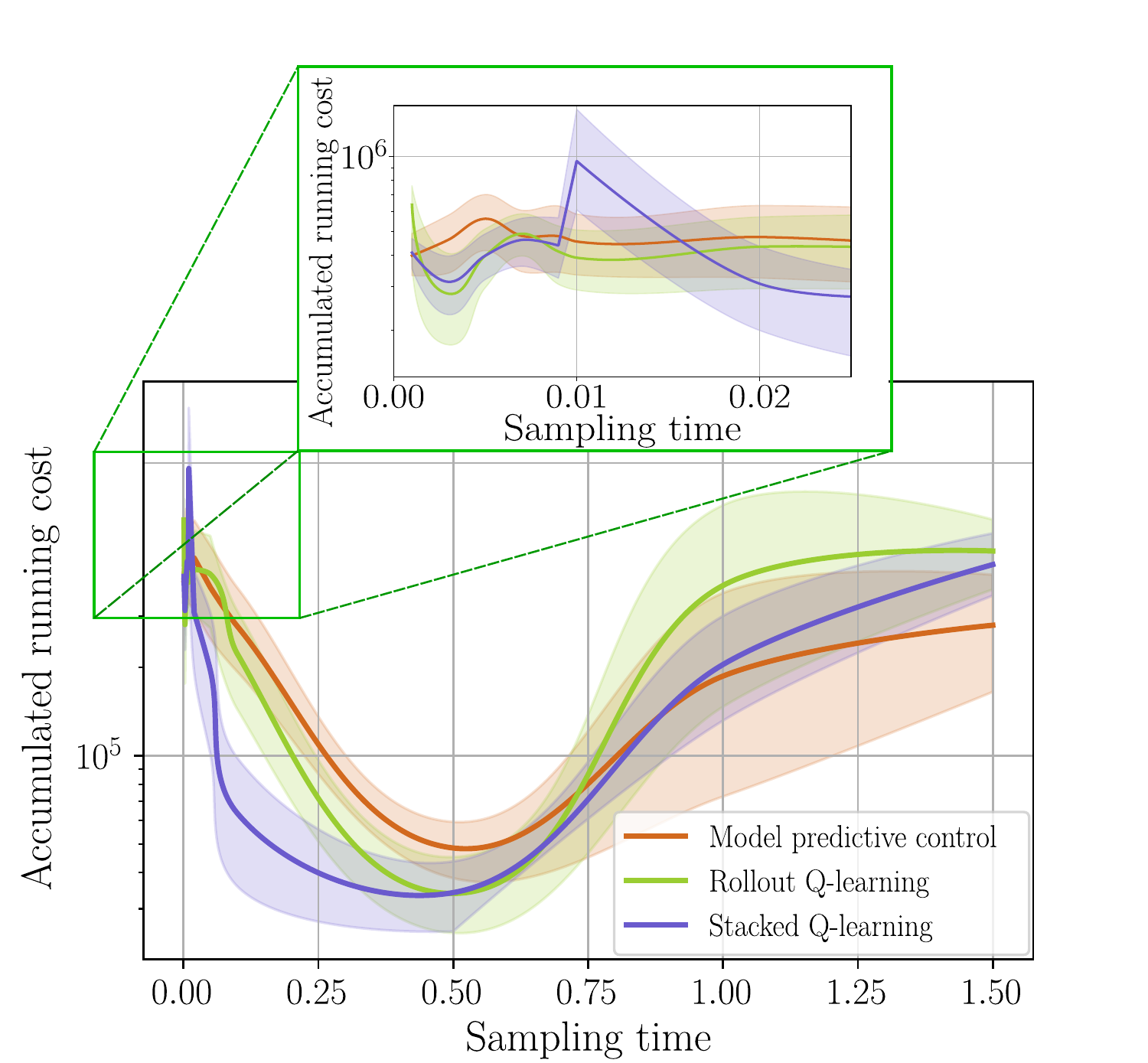}}
    \caption{Relationship between the accumulated stage cost and the sampling time. Solid line -- average over 30 observations, shaded area -- 95 \% confidence level.}
    \label{fig:sampling_rate}
    \end{center}
\end{figure}

\begin{figure}[ht]
    \begin{center}
    \centerline{\includegraphics[width=\columnwidth]{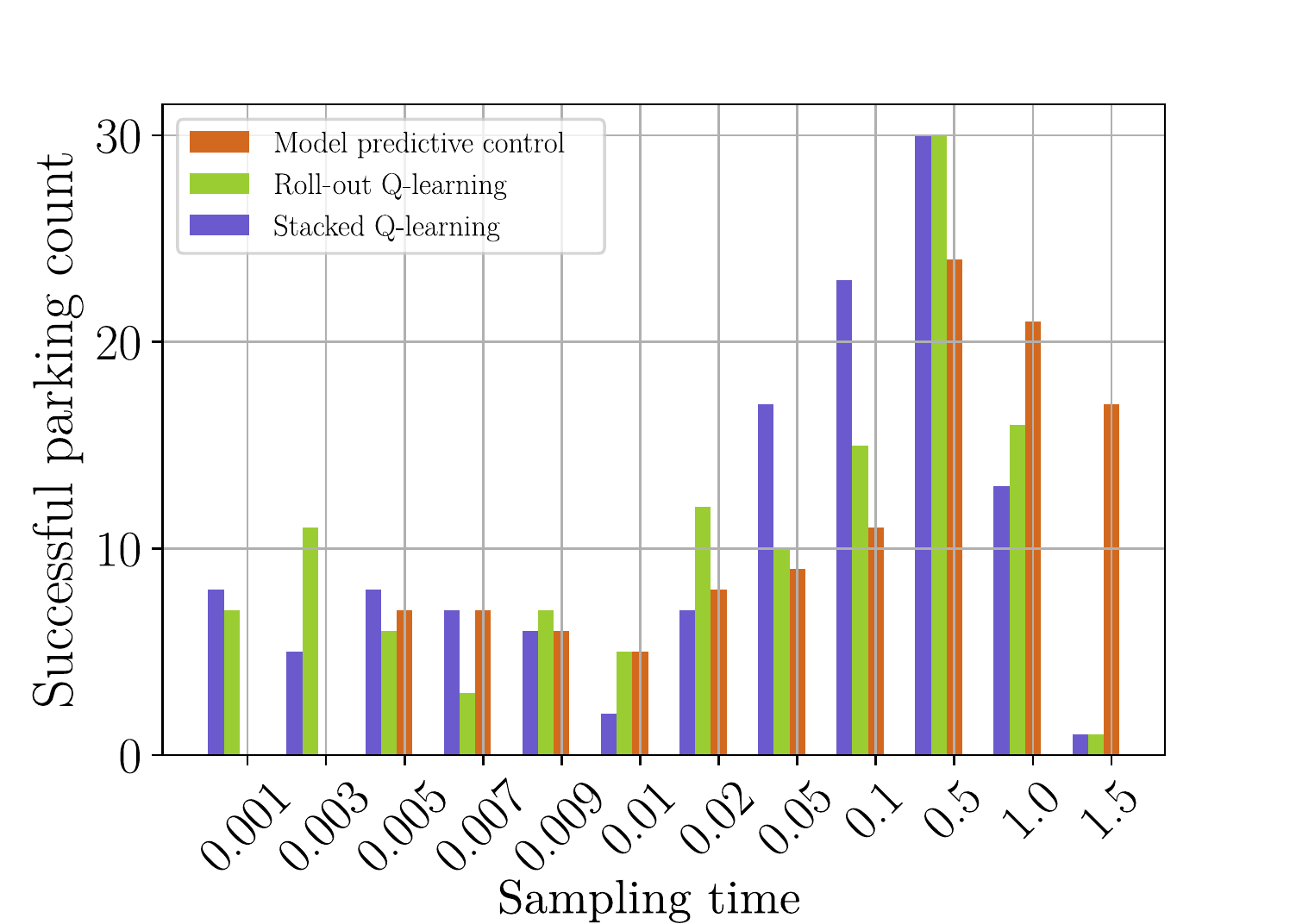}}
    \caption{Successful parking count depending on the sampling time.}
    \label{fig:sampling_rate_park}
    \end{center}
\end{figure}

\begin{figure}[ht]
    \begin{center}
    \centerline{\includegraphics[width=\columnwidth]{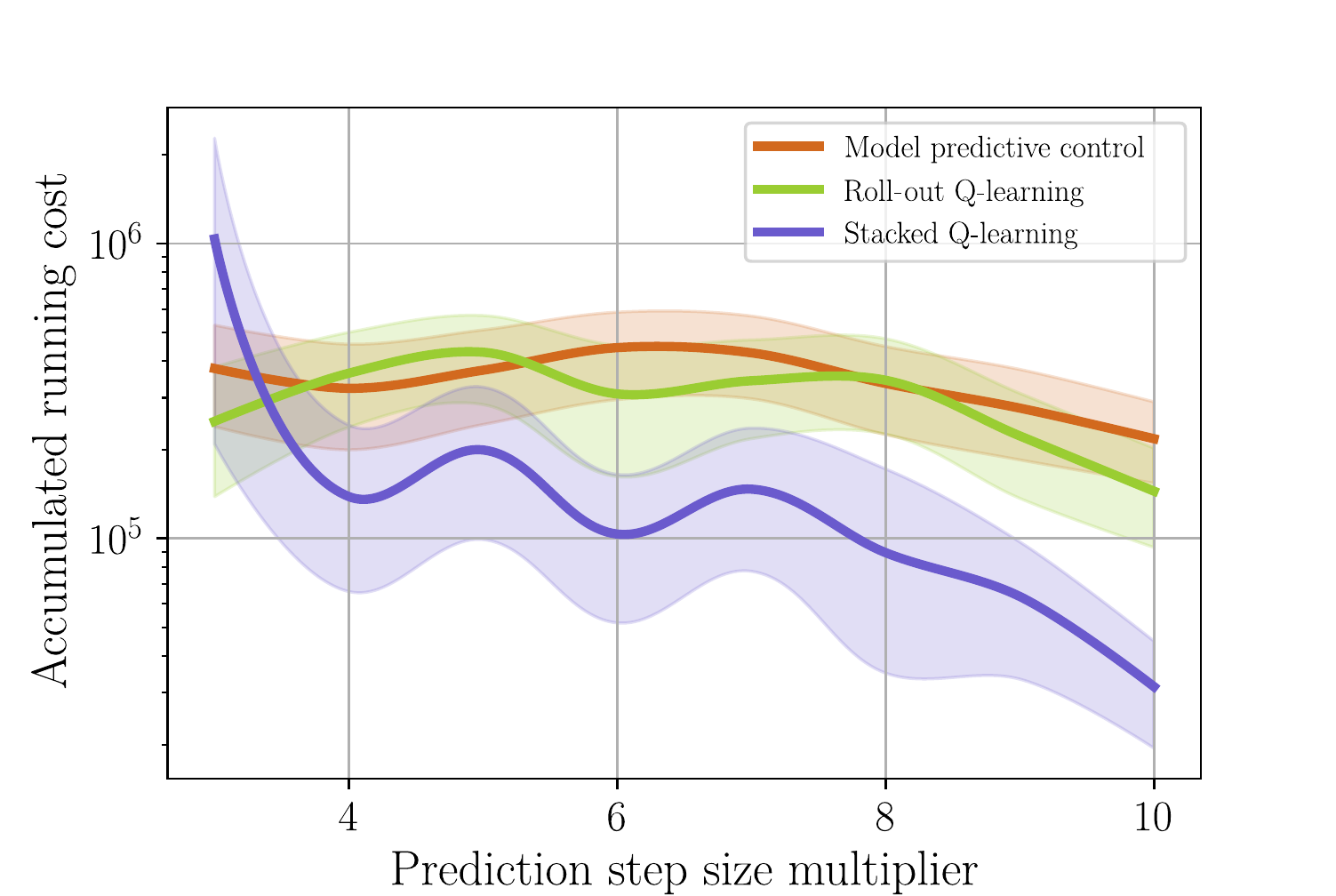}}
    \caption{Relationship between the accumulated stage cost and the prediction step size multiplier. Solid line -- average over 30 observations, shaded area -- 95 \% confidence level.}
    \label{fig:pred_step_size}
    \end{center}
\end{figure}

\begin{figure}[ht]
    \begin{center}
    \centerline{\includegraphics[width=\columnwidth]{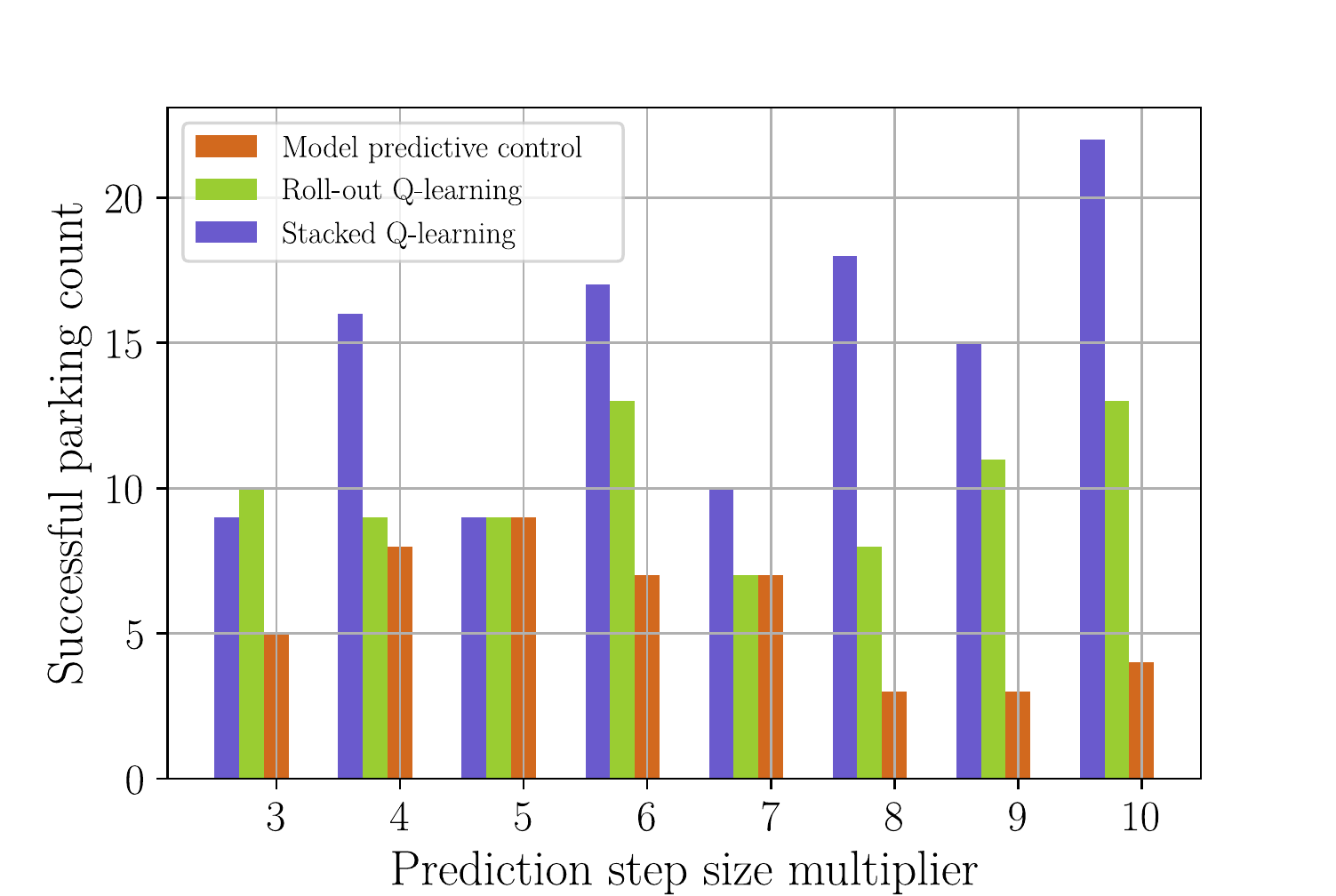}}
    \caption{Successful parking count depending on the prediction step size.}
    \label{fig:pred_step_park}
    \end{center}
\end{figure}


\begin{figure}[ht]
    \begin{center}
    \centerline{\includegraphics[width=\columnwidth]{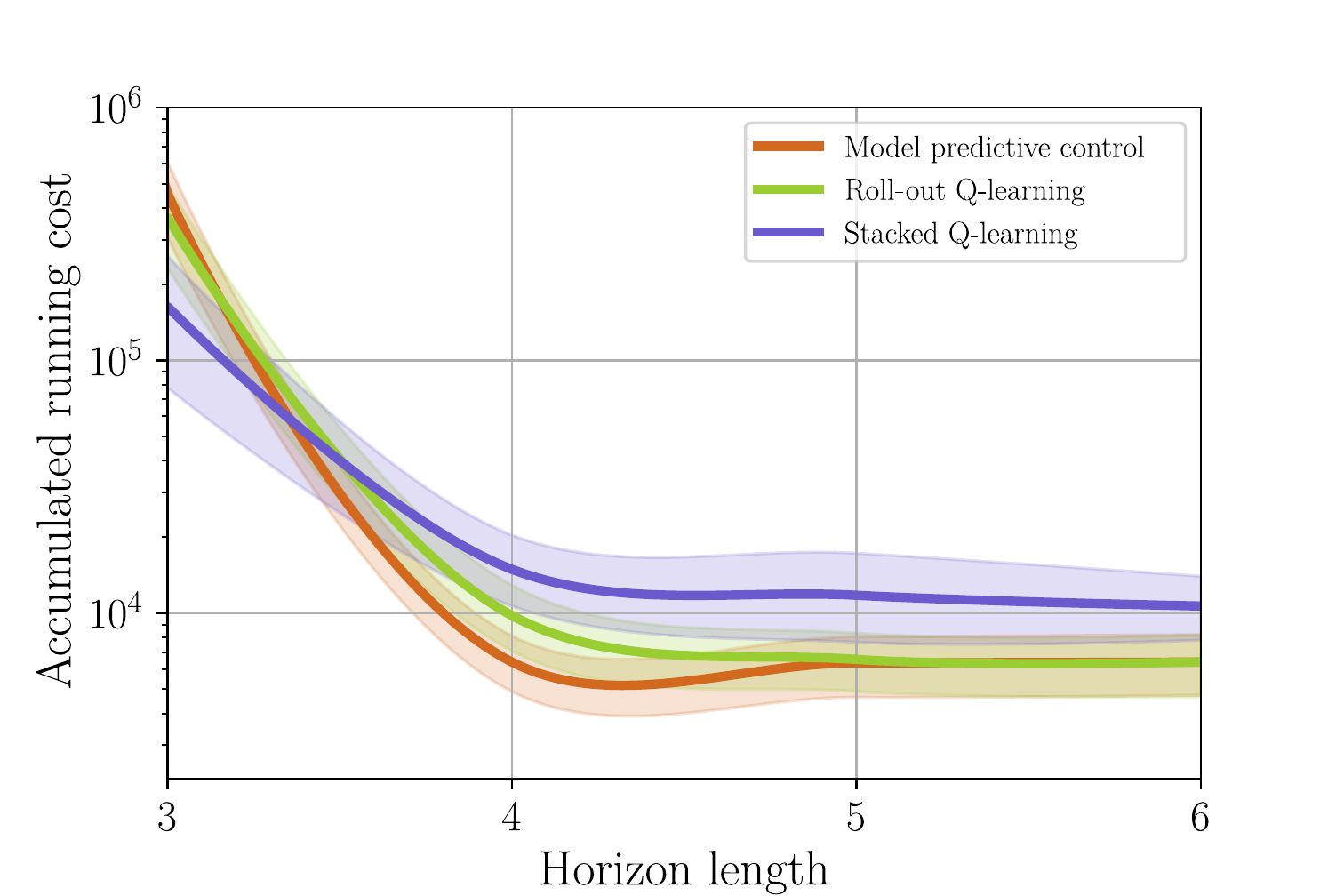}}
    \caption{Relationship between the accumulated stage cost and the prediction horizon length. Solid line -- average over 30 observations, shaded area -- 95 \% confidence level.}
    \label{fig:pred_horizon_size}
    \end{center}
\end{figure}

\begin{figure}[ht]
    \begin{center}
    \centerline{\includegraphics[width=\columnwidth]{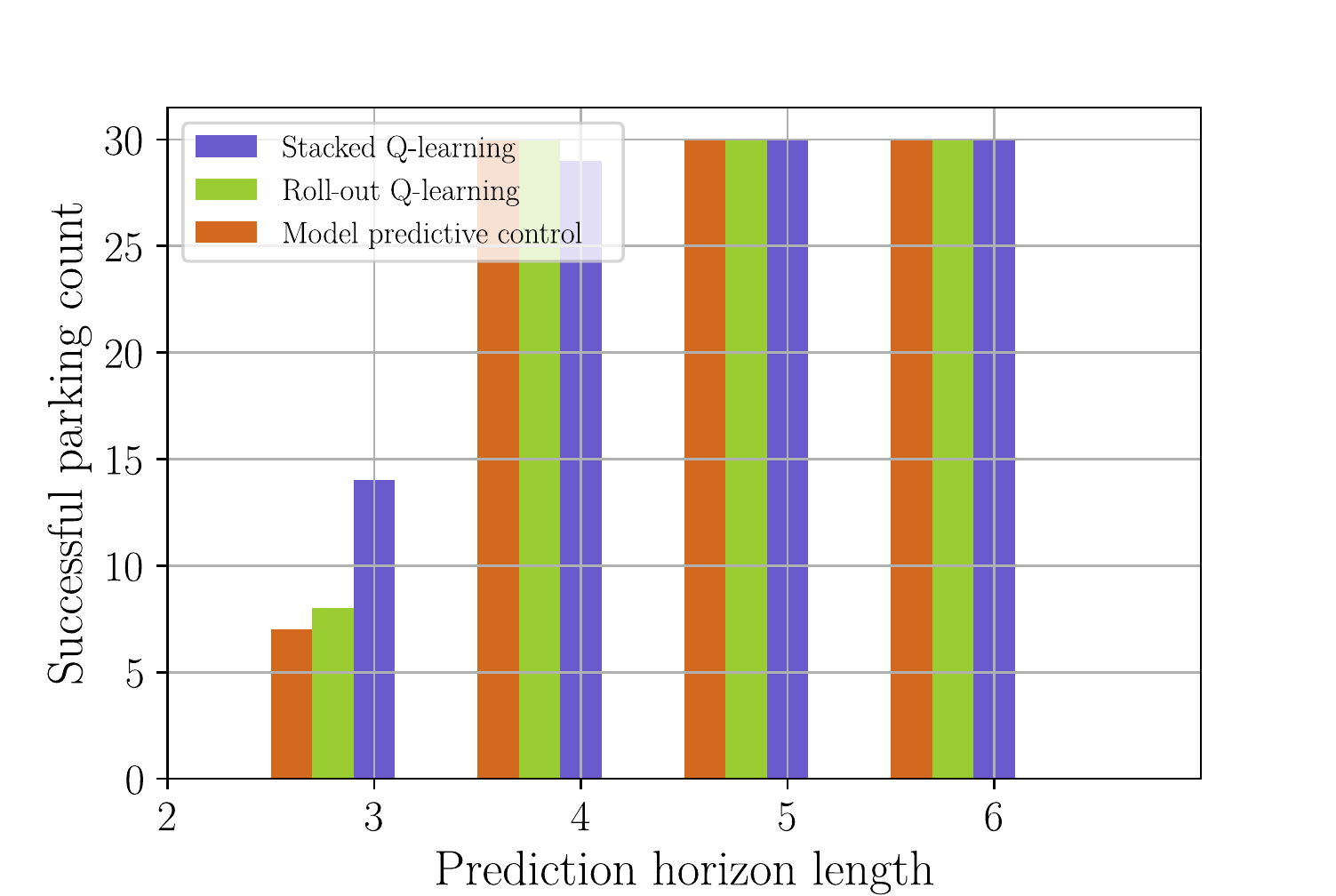}}
    \caption{Successful parking count tuning the prediction horizon length.}
    \label{fig:pred_hor_park}
    \end{center}
\end{figure}

\section{Results and discussion}
\label{sec:discussion}



It was observed that too low sampling time led to somewhat higher cost which can be explained by a too narrow-sighted controller.
Increasing the sampling time remedies that issue, but only up to a certain point where prediction inaccuracies start to dominate, whence the cost grows again (see Fig. \ref{fig:sampling_rate}).
A similar tendency can be observed in terms of successful parking count (see Fig. \ref{fig:sampling_rate_park}).
As the prediction step size was increased, it was observed that both the accumulated stage cost and successful parking count slightly improved.
This may explained by an effective horizon enlargement (though retaining the resolution).
Again, one should be aware of growing prediction errors while increasing the prediction step size.
As far as the horizon length itself is concerned, there was a clear tendency of performance improvement with higher $N$.
At the horizon length of 5, all the controllers succeeded to park the robot in all trials.

In terms of the algorithm comparison, some interesting phenomena could be noticed.
First of all, both the roll-out and stacked QL generally outperformed MPC at shorter sampling times and horizons.
Fig. \ref{fig:pred_step_park} in turns shows far superior successful parking compared to MPC.
These observations support the idea that integration of learning elements into classical controller \eg in the form of RL, is beneficial.
In theory, the Q-function captures the performance of the agent over an infinite horizon.
It seems logical that approximating the Q-function sufficiently well may yield better control actions than optimizing a plain sum of stage costs.
The following hint could be made: \textit{predictive RL is more beneficial than MPC at shorter horizons}.
This is because as the horizon length grows, predictive RL becomes indistinguishable from MPC (see Fig. \ref{fig:pred_horizon_size}), while both simply approach the globally optimal controller.
One should be aware of the computational complexity though.

Roughly, it can be described as follows.
Denote the MPC complexity (in terms of optimizing $J_{\text{MPC}}$) by $\mathcal O \left(\Psi_{\text{MPC}}(N) \right)$ for some function $\Psi_{\text{MPC}}$ of the horizon length (in particular, the complexity is exponential in $N$).
Then, the complexities of the roll-out and stacked QL are both $\mathcal O(\Psi_{\text{MPC}}(N)) + \mathcal O \left(\Gamma_{\textrm{QL}}(M, n_{\varphi}) \right)$ where $\Gamma_{\textrm{QL}}$ describes the complexity of the critic update (in terms of optimizing $J_c$), $M$ is the experience replay size and $n_{\varphi}$ is the number of critic weights.

A final note should be made about the difference in performance between the roll-out and stacked QL.
Remarkably, the latter significantly outperformed the former at a short horizon ($N$=3), both in terms of the accumulated stage cost and successful parking count.
This may be explained in a similar manner as above, when comparing RL with MPC.
Namely, learning elements of RL are more beneficial at shorter horizons.
Notice that the roll-out QL ``retains'' more from MPC than its stacked counterpart.
At longer horizons, such a structure of the roll-out QL becomes more beneficial than the stacked QL.
Notice also that nominally both QL methods have the same complexity.
However, taking into account better performance of the stacked QL at shorter horizons, the practical complexity of the stacked QL may even be considered lower than that of the roll-out variant.
That is, the stacked QL may achieve a comparable performance to the roll-out QL with a longer horizon.

\section{Conclusion}
\label{conclusion}	

As learning-based control becomes ever more attractive, it faces ever more challenges in industry, where, traditionally, such controllers as MPC are recognized due to their formal guarantees.
Reinforcement learning slowly transitions from playgrounds like videogames into more challenging environments.
On this path, it seems unavoidable that some of the well-established classical machinery, such as predictive control, can be made use of in RL.
This is supported, in particular, by the attractive trend of fusion of MPC and RL.
The current study was generally dedicated to this topic and considered a particular, yet fairly popular, control problem of parking of a mobile robot by MPC and RL.
The influence of the prediction- and sampling-related hyperparameters, namely, the prediction horizon step and length sizes, and the sampling time, was investigated.
It was generally observed that RL-based controllers appeared more efficient than MPC at shorter horizon lengths, where the learning-based elements dominated.
Predictive RL, like the herein studied stacked Q-learning, may be considered a viable solution in terms of fusion of classical controllers and RL agents.

\bibliographystyle{IEEEtran}
\bibliography{bib/ML, bib/MPC, bib/opt-ctrl, bib/Osinenko, bib/dynam, bib/survey, bib/parameters, bib/nonsmooth-analysis, bib/optimization, bib/ENDI}

\end{document}